\theoremstyle{plain}
\newtheorem{Th}{Theorem}[section]
\newtheorem{Lemma}[Th]{Lemma}
\newtheorem{Prop}[Th]{Proposition}
\theoremstyle{definition}
\newtheorem{Def}[Th]{Definition}
\newtheorem{Rem}[Th]{Remark}
\newtheorem{?}[Th]{Problem}
\newtheorem{Ex}[Th]{Example}
\DeclareSymbolFont{rsfs}{U}{rsfs}{m}{n}
\DeclareSymbolFontAlphabet{\mathscrsfs}{rsfs}
\newcommand{\ldb}{\{\!\!\{}
\newcommand{\rdb}{\}\!\!\}}
\begin{document}
	\title{A relation between Turaev coaction, Goncharov--Brown coaction and the reduced coaction Lie algebra}
	\author{Muze Ren\thanks{Section of Mathematics, University of Geneva, Rue du Conseil-Général 7-9, 1205 Geneva, Switzerland \href{mailto:muze.ren@unige.ch}{muze.ren@unige.ch}}} 
	\maketitle	
	\begin{abstract}
	 We present a formula that relates the Turaev coaction and the Goncharov–Brown coaction. Motivated by this relation, we introduce the reduced coaction equation. The skew-symmetric solutions to this equation form a Lie algebra under Ihara bracket.	
    \end{abstract}

\section{Introduction and main results}
Let $k$ be a field of characteristic $0$. We are interested in the following two different Hopf algebras of formal noncommutative power series:
\[A=(k\langle\langle x_0,x_1\rangle\rangle,\rm{conc},\Delta_{\shuffle}, \varepsilon, S)\quad \rm{and}\quad \widehat{A}=(k\langle\langle x_0,x_1\rangle\rangle, \shuffle,\Delta_{\rm{dec}}, \varepsilon).\]
Here, "$\rm{conc}$" denotes the concatenation product, and $\shuffle$ is the shuffle product. The counit is defined by  $\varepsilon(x_0)=\varepsilon(x_1)=0$. The coproducts are given by
\begin{align*}
&\Delta_{\shuffle}(x_i)=x_i\otimes 1+1\otimes x_i,\quad i=0,1.
&\Delta_{\rm{dec}}(w)=\sum_{uv=w}u\otimes v.
\end{align*}

The Hopf algebra $A$ is dual to $(k\langle x_0,x_1\rangle, \shuffle, \Delta_{\rm{dec}})$ with respect to the pairing \begin{equation*}
k\langle\langle x_0,x_1\rangle\rangle\otimes k\langle x_0,x_1\rangle; \varphi\otimes w\mapsto c_{w}(\varphi),
\end{equation*}
where $c_{w}(\varphi)$ denotes the coefficient of the word $w$ in the series $\varphi$. For any $x\in A$, we can decompose it as
\begin{equation*}
	x=\varepsilon(x)+\sum^1_{i=0}x_id^R_i(x)=\varepsilon(x)+\sum^1_{i=0}d^{L}_i(x)x_i
\end{equation*}

We denote the free Lie algebra in $A$ by $\mathfrak{fr}_k(x_0,x_1)$, the indecomposable $\ker \varepsilon/(\ker\varepsilon)^2$ of $\widehat{A}$ by $\mathcal{L}$.  

There are two algebraic operations on $A$ and $\widehat{A}$ that are dual to each other, namely the Ihara action and the Goncharov-Brown coaction. These operations play a fundamental role in the study of multiple zeta values, mixed Tate motives and polylogarithms.

\begin{itemize}
	\item {\bf{Ihara action}} $I:\mathfrak{fr}_k(x_0,x_1)\times A\to A; \psi\times a \mapsto d_{\psi}(a)$, 	
	where $d_{\psi}$ is the derivation given by $d_{\psi}(x_0)=0$ and $d_{\psi}(x_1)=[x_1,\psi]$. This action induces the Ihara Lie bracket on $\mathfrak{fr}_k(x_0,x_1)$ given by
	\begin{equation}\label{eq:Ihara_bracket}
		\{\psi_1,\psi_2\}=d_{\psi_2}(\psi_1)-d_{\psi_1}(\psi_2)-[\psi_1,\psi_2]
	\end{equation}
    For the arithmetic motivations, see \cite{Ihara} and \cite{Deligne1989}. 
    \item {\bf{Goncharov-Brown} coaction} $D:\widehat{A}\to \mathcal{L}\otimes \widehat{A}$. 
    \begin{align}\label{eq:GBcoaction}
    	&D(\varepsilon_1\ldots \varepsilon_n)=\sum_{0\le p<q\le n}[I(\varepsilon_p;\varepsilon_{p+1}\dots\varepsilon_{q};\varepsilon_{q+1})]\otimes \varepsilon_1\dots \varepsilon_p\varepsilon_{q+1}\dots \varepsilon_n
    \end{align}
    (The notation will be clarified in next section.) This coaction induces a Lie cobracket on $\mathcal{L}$. For the definition of coproduct, see \cite{Goncharov2001}. For the important application of the coaction, \cite{Brown2012,Brown2013,Brown2014}. 
\end{itemize}

In the study of intersections of curves on surfaces, the Goldman--Turaev Lie bialgebra \cite{Turaev} and its noncommutative analogue, the necklace Lie bialgebras \cite{Schedler} play an important role. Those two brackets also arise from another pair of action and coaction. The  Kirillov-Kostant-Souriau double bracket $\ldb-,-\rdb_{\rm{KKS}}$ induces the following action.
\begin{itemize}
	\item {\bf{KKS action}}: $p:A\times A\to A$ which descends to $A/[A,A]\times A\to A$ and induces the necklace Lie bracket
	$\{-,-\}_{\rm{necklace}}$ on $A/[A,A]$, it induces the KKS poisson bracket on representation space.  See section 2.6 in \cite{VandenBergh2008} and section 3 in \cite{AFR2024} for further details.\\
	\item {\bf{Turaev coaction}}: The reduced coaction $\mu:A\to A$. It is equivalent to the coaction map $p_{\mu}:A\to  A/[A,A]\otimes A$ and the coaction induces a Lie cobracket $\delta_{\rm{necklace}}$ on $A/[A,A]$. 
	
	We discuss only the reduced coaction in this paper. For further details of the coaction, see the linear term of the Turaev coproduct in \cite{Turaev}, "$\mu$" in \cite{KK2014} and "quasi derivations" in \cite{Massuyeau}. For the equivalence to reduced one, see section 3 of \cite{AFR2024}.
\end{itemize}

The relation between Ihara action and KKS action was first studied by Drinfeld in the context of Hamiltonian interpretation of Ihara Lie algebra (section 6 in \cite{Drinfeld1990}). And Kontsevich \cite{Kontsevich} using noncommutative geoemtry, see also Goncharov Proposition 5.1 in \cite{Goncharov2005} etc. It is then very natural to compare the coactions.

The reduced coaction $\mu$ is a linear map from $k\langle\langle x_0,x_1\rangle\rangle$ to $k\langle\langle x_0,x_1\rangle\rangle$ which is defined to be
\begin{align*}
		&\mu(x_0)=\mu(x_1)=0,\quad
		(x_i\odot x_{j}):=\delta_{x_i,x_j}x_i,\quad x_i,x_j\in \{x_0,x_1\}\\
		&\mu(k_1k_2,\dots,k_n):=\sum^{n-1}_{i=1}k_1,\dots,k_{i-1}(k_i\odot k_{i+1})k_{i+2}\dots k_n,\quad k_1,\dots,k_n\in \{x_0,x_1\}
\end{align*}
We denote $\widehat{\mu}$ to be the dual of $\mu$. Our first result relating the coaction $D$ and $\widehat{\mu}$ is the following simple explicit formula, see next section for notation.

\begin{Th}
For $\varepsilon_1,\dots,\varepsilon_n\in \{x_0,x_1\}$, we have
\begin{multline}\label{eq:commute}
D\circ \widehat{\mu}-(1\otimes \widehat{\mu})\circ D (\varepsilon_1\ldots \varepsilon_n)\\
=\sum_{0\le p<q\le n}[I^{\mu}(\varepsilon_p;\varepsilon_{p+1}\dots\varepsilon_{q};\varepsilon_{q+1})]\otimes \varepsilon_1\dots \varepsilon_p\varepsilon_{q+1}\dots \varepsilon_n
\end{multline}
\end{Th}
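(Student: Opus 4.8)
The plan is to prove \eqref{eq:commute} by expanding both $D\circ\widehat\mu$ and $(1\otimes\widehat\mu)\circ D$ explicitly and matching terms. The first preparatory step is to make $\widehat\mu$ concrete. Since $\mu$ contracts each adjacent \emph{equal} pair of letters to a single letter and annihilates unequal pairs, computing $c_w(\mu(v))$ for words $v,w$ shows that the adjoint of $\mu$ with respect to the coefficient pairing is the \emph{de-contraction} operator
\begin{equation*}
\widehat\mu(\varepsilon_1\dots\varepsilon_n)=\sum_{j=1}^{n}\varepsilon_1\dots\varepsilon_{j-1}\varepsilon_j\varepsilon_j\varepsilon_{j+1}\dots\varepsilon_n ,
\end{equation*}
i.e. $\widehat\mu$ sums over all ways of duplicating a single letter, runs of equal letters automatically producing the correct multiplicities.

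Next I would substitute this formula and the defining formula \eqref{eq:GBcoaction} into both compositions and index every resulting term by a duplicated position together with a cut interval. In $D\circ\widehat\mu$ a term is indexed by a duplication position $j\in\{1,\dots,n\}$, producing a length-$(n+1)$ word $W^{(j)}$, and by a cut $0\le P<Q\le n+1$ removing the block $[P+1,Q]$ of $W^{(j)}$. In $(1\otimes\widehat\mu)\circ D$ a term is indexed by a cut $0\le p<q\le n$ of the original word and by a duplication position $k$ in the surviving word $\varepsilon_1\dots\varepsilon_p\varepsilon_{q+1}\dots\varepsilon_n$. The organizing principle is a dichotomy for the position of the duplicated pair $(j,j+1)$ of $W^{(j)}$ relative to the removed block: either the pair lies strictly to the left ($j+1\le P$) or strictly to the right ($j\ge Q+1$) of the cut, in which case the duplicate survives in the outer word while the integrand data are untouched; or the pair meets the cut ($P\le j\le Q$), in which case the duplicate is (partly) removed and the outer word carries no duplication.

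The key computational step is then an explicit index bijection showing that the first family cancels against the \emph{entire} second composition: a left-duplication term $(j,P,Q)$ with $j\le P-1$ matches the cut $(p,q)=(P-1,Q-1)$ with duplication position $k=j\le p$ in the left block, and a right-duplication term with $j\ge Q+1$ matches the cut $(p,q)=(P,Q)$ with duplication in the right block. After the index shifts the integrand classes and the duplicated outer words agree, so these terms cancel in the difference. What remains is exactly the family where the duplication meets the cut, with $(p,q)=(P,Q-1)$ and $P\le j\le Q$; summing over $j$ replaces the middle word $\varepsilon_{p+1}\dots\varepsilon_q$ by the terms obtained from duplicating its internal letters ($p+1\le j\le q$) together with the terms copying the endpoints $\varepsilon_p$ and $\varepsilon_{q+1}$ into the middle ($j=p$ and $j=q+1$). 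By the definition of $I^\mu$ given in the next section this is precisely $[I^\mu(\varepsilon_p;\varepsilon_{p+1}\dots\varepsilon_q;\varepsilon_{q+1})]\otimes\varepsilon_1\dots\varepsilon_p\varepsilon_{q+1}\dots\varepsilon_n$, yielding \eqref{eq:commute}.

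The hard part will be the bookkeeping at the boundary of the cut. Three points need care: that the virtual endpoints $\varepsilon_0,\varepsilon_{n+1}$ are never duplicated, so that the ``copy an endpoint into the middle'' terms are correctly absent when $p=0$ or $q=n$; that the degenerate terms with $p=q$ thrown up by the expansion vanish under the regularization conventions for $I$, since there the integrand word begins or ends with a repeated endpoint; and that the two index shifts used in the bijection are mutually consistent across the left and right boundary cases simultaneously. Once these boundary subtleties are pinned against the precise definition of $I^\mu$, the identity follows from the term-by-term matching.
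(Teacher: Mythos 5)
Your overall strategy is genuinely different from the paper's: the paper argues in one line by dualizing the six-case decomposition of the action-side operator $\mu\circ I - I\circ(1\otimes\mu)$ (mirroring the derivation of Brown's Proposition 2.6), whereas you work entirely on the coaction side, identifying $\widehat\mu$ explicitly as the letter-duplication operator and setting up an index bijection. That skeleton is sound: your formula for $\widehat\mu$ is the correct adjoint of $\mu$ with respect to the coefficient pairing, and the bijection sending a left-duplication term $(j,P,Q)$ with $j\le P-1$ to the cut $(P-1,Q-1)$ and a right-duplication term with $j\ge Q+1$ to the cut $(P,Q)$ does match integrands, boundary letters and outer words, so the first family cancels against all of $(1\otimes\widehat\mu)\circ D$. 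The identification of the surviving family $P\le j\le Q$ with the three pieces of $I^\mu$ (internal duplication dual to $\mu$, the $j=p$ and $j=q+1$ terms dual to $d^R_{\varepsilon_p}$ and $d^L_{\varepsilon_{q+1}}$, with the antipode handled in the $(x_0,x_1)$ case) is also the right picture for $Q\ge P+2$.

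The gap is your third ``point needing care'': the degenerate terms with $Q=P+1$, i.e.\ $p=q$, do \emph{not} vanish, and the reason you give for discarding them is false. For such a term the integrand is a single letter, one copy of the duplicated pair, and e.g.\ $I(x_0;x_0;x_1)=S(x_0)=-x_0$ and $I(x_1;x_1;x_0)=x_1$; a single letter is not decomposable in the shuffle algebra, so $[x_0],[x_1]\neq 0$ in $\mathcal{L}$. The regularization that kills things is $I(a;w;a)=\varepsilon(w)$ for \emph{equal} outer arguments (and $[x_i^k]=0$ for $k\ge 2$), neither of which applies here. Summing the two degenerate terms attached to each $j$ and telescoping over $j$ with the conventions $\varepsilon_0=x_1$, $\varepsilon_{n+1}=x_0$, one gets
\begin{equation*}
\Bigl(\,\sum_{j=0}^{n} h(\varepsilon_j,\varepsilon_{j+1}) - I(x_1;x_1;\varepsilon_1)-I(\varepsilon_n;x_0;x_0)\Bigr)\otimes\varepsilon_1\cdots\varepsilon_n,
\qquad h(a,b):=I(a;b;b)+I(a;a;b),
\end{equation*}
and since $h(x_1,x_0)=x_0+x_1=-h(x_0,x_1)$ while $h(a,a)=0$, this equals $(x_0+x_1)-\delta_{\varepsilon_1,x_0}x_1-\delta_{\varepsilon_n,x_1}x_0$ tensored with the full word --- zero only when $(\varepsilon_1,\varepsilon_n)=(x_0,x_1)$. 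These leftover terms are exactly the duals of the $\varepsilon(\cdot)$-contributions of the insertion operators in the action-side case analysis, and they have no slot in the right-hand sum over $p<q$. You must either prove they cancel (they do not, in general, under the stated conventions) or track them explicitly and reconcile them with the intended reading of $I^\mu$ at the degenerate indices; as written, the term-by-term matching does not close.
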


The formula is interesting because we could denote the RHS of \eqref{eq:commute} by $D^{\mu}$. The definition of $D^{\mu}$ looks similar to the definition of $D$ with the help of the notations $I^{\mu}$. This formula also allow us to further compare the two co-products.  And inspired by the notation $I^{\mu}$, we introduce the following form of reduced coaction equations and a Lie algebra defined with Ihara bracket.

Let $\eta$ be a Lie series in $k\langle\langle x_0,x_1\rangle\rangle$, with $c_{x_0}(\eta)=c_{x_1}(\eta)=0$, the function $r_{\eta}$ associated to $\eta$ is defined to be $r_{\eta}(x)=\sum_{l\ge 0}c_{x^{l+1}_0x_1}(\eta)x^{l+1}$.

\begin{Def}[The reduced coaction equation] For $\eta$ in $\mathfrak{fr}_k(x_0,x_1)$ with $c_{x_0}(\eta)=c_{x_1}(\eta)=0$. The reduced coaction equation is defined to be
\begin{equation}\label{variant_reduced_coaction_equation}
d^R_1(\eta)+\mu(\eta)+d^L_0(\eta)=-r_{\eta}(x_1)+r_{\eta}(x_0).
\end{equation}

We denote by $\overline{{\mathfrak{rc}}_0}$ the vector space of skew symmetric solutions of the equation, i.e. $\eta(x_0,x_1)=-\eta(x_1,x_0)$.
\end{Def}

\begin{Ex}
	For the element	$f_3=[x_0,[x_0,x_1]]+[x_1,[x_0,x_1]]$, we have that $\mu(f_3)=0$ and 
	\begin{equation*}
		d^L_0(f_3)=-2x_0x_1+x_1x_0-x_1^2,\quad  d^R_1(f_3)=x_0^2+2x_0x_1-x_1x_0, \quad r_{f_3}(x)=x^2.
	\end{equation*}
	And we can directly verify that $f_3$ satisfies the reduced coaction equation 
	\begin{equation}
	d^L_0(f_3)+\mu(f_3) +d^R_1(f_3)	=-r_{f_3}(x_1)+r_{f_3}(x_0)
	\end{equation}
\end{Ex}

\begin{Th}\label{th:Ihara_bracket}
	The vector space $\overline{{\mathfrak{rc}}_0}$ is a Lie algebra with the Ihara bracket \eqref{eq:Ihara_bracket},
\end{Th}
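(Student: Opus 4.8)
The space $\overline{\mathfrak{rc}_0}$ is visibly a linear subspace of $\mathfrak{fr}_k(x_0,x_1)$: the defining equation \eqref{variant_reduced_coaction_equation} is linear in $\eta$ (each of $\dr_1$, $\mu$, $\dl_0$ and the map $\eta\mapsto r_\eta$ is linear), and the skew-symmetry constraint $\eta(x_0,x_1)=-\eta(x_1,x_0)$ is linear as well. Since the Ihara bracket \eqref{eq:Ihara_bracket} is already a Lie bracket on $\mathfrak{fr}_k(x_0,x_1)$, antisymmetry and the Jacobi identity are inherited for free, so the whole statement reduces to closure: if $\eta_1,\eta_2\in\overline{\mathfrak{rc}_0}$ then $\{\eta_1,\eta_2\}\in\overline{\mathfrak{rc}_0}$. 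First I would record the harmless bookkeeping that $\{\eta_1,\eta_2\}$ again lies in $\mathfrak{fr}_k(x_0,x_1)$ and, since each $\eta_i$ has degree $\ge 2$, that every summand of $\{\eta_1,\eta_2\}=d_{\eta_2}(\eta_1)-d_{\eta_1}(\eta_2)-[\eta_1,\eta_2]$ has degree $\ge 4$; in particular it has no constant or linear part, so it lies in the domain where \eqref{variant_reduced_coaction_equation} makes sense. Closure then splits into two independent claims: (i) $\{\eta_1,\eta_2\}$ is skew-symmetric, and (ii) $\{\eta_1,\eta_2\}$ solves \eqref{variant_reduced_coaction_equation}.

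For (i) I would introduce the swap automorphism $\tau$ with $\tau(x_0)=x_1$, $\tau(x_1)=x_0$, together with the conjugate Ihara derivation $\bar d_\psi$ defined by $\bar d_\psi(x_1)=0$ and $\bar d_\psi(x_0)=[x_0,\psi]$, and the associated conjugate bracket $\{\psi_1,\psi_2\}^{-}:=\bar d_{\psi_2}(\psi_1)-\bar d_{\psi_1}(\psi_2)-[\psi_1,\psi_2]$. Comparing on generators gives the two identities $\tau\circ d_\psi=\bar d_{\tau(\psi)}\circ\tau$ and $d_\psi+\bar d_\psi=-\operatorname{ad}_\psi$. The first yields $\tau\{\psi_1,\psi_2\}=\{\tau\psi_1,\tau\psi_2\}^{-}$, and the second, after substituting $(d_\psi+\bar d_\psi)(\psi')=[\psi',\psi]$ into the sum $\{\psi_1,\psi_2\}+\{\psi_1,\psi_2\}^{-}$, forces $\{\psi_1,\psi_2\}^{-}=-\{\psi_1,\psi_2\}$. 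Combining, $\tau\{\psi_1,\psi_2\}=-\{\tau\psi_1,\tau\psi_2\}$; specializing to $\tau\eta_i=-\eta_i$ gives $\tau\{\eta_1,\eta_2\}=-\{\eta_1,\eta_2\}$, which is exactly the desired skew-symmetry. This part is clean and purely formal.

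For (ii) I would first exploit a byproduct of (i). Since $\tau$ interchanges $\dr_0\leftrightarrow\dr_1$ and $\dl_0\leftrightarrow\dl_1$ (as $\tau\,\dr_1=\dr_0\,\tau$, etc.), commutes with $\mu$ (because $x_i\odot x_j=\delta_{x_i,x_j}x_i$ is $\tau$-equivariant), and sends $r_\eta(x_1)\mapsto r_\eta(x_0)$, applying $\tau$ to \eqref{variant_reduced_coaction_equation} shows that every skew-symmetric solution also satisfies the conjugate equation $\dr_0(\eta)+\mu(\eta)+\dl_1(\eta)=-r_\eta(x_1)+r_\eta(x_0)$. Thus for $\eta_1,\eta_2\in\overline{\mathfrak{rc}_0}$ I may use both equations. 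The heart of the matter is then to expand $E(\{\eta_1,\eta_2\})$, where $E(\eta)$ denotes the difference of the two sides of \eqref{variant_reduced_coaction_equation}, using $\{\eta_1,\eta_2\}=d_{\eta_2}(\eta_1)-d_{\eta_1}(\eta_2)-[\eta_1,\eta_2]$ and linearity of $E$. For this I would establish commutation (Leibniz-type) rules describing how each constituent of $E$ interacts with $d_\psi$ and with $\operatorname{ad}_\psi$: the one-sided derivatives $\dr_1$ and $\dl_0$ commute with $d_\psi$ up to explicit boundary corrections coming from the fact that $d_\psi(x_1)=x_1\psi-\psi x_1$ creates or removes a leading, resp. trailing, letter, and these corrections are precisely what the $r_\eta$ terms are designed to absorb.

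The main obstacle is the $\mu$-term, since $\mu$ is not a derivation and does not commute with $d_\psi$ in any naive way; controlling $\mu(d_\psi(\eta))$ requires the explicit contraction structure $x_i\odot x_j=\delta_{x_i,x_j}x_i$, and this is exactly where the formula \eqref{eq:commute} relating $\mu$ (through $\widehat\mu$) to the Goncharov--Brown coaction $D$ enters, since it packages the commutator of $\mu$ with the coaction in terms of the operation $I^\mu$ whose shape mirrors \eqref{variant_reduced_coaction_equation}. Concretely, I expect to reinterpret $E$ as the extraction of a distinguished component of $D^\mu$ (the right-hand side of \eqref{eq:commute}), after which closure of $\overline{\mathfrak{rc}_0}$ under $\{\cdot,\cdot\}$ becomes the assertion that $E(\{\eta_1,\eta_2\})$ is a combination of $E(\eta_1)$ and $E(\eta_2)$ (both zero) once the boundary and contraction contributions are matched. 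Verifying this matching term by term — in particular the bookkeeping of leading and trailing $x_0,x_1$ letters against the $r_\eta$ polynomials, together with the $\mu$-contractions — is the routine but delicate computation I would expect to occupy the bulk of the proof, and where the conjugate equation from (i) is used to symmetrize away the remaining terms.
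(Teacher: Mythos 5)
Your overall strategy is the same as the paper's: reduce to closure under the bracket, derive the $\tau$-conjugate equation $\dr_0(\eta)+\mu(\eta)+\dl_1(\eta)=-r_\eta(x_1)+r_\eta(x_0)$ from skew-symmetry (this is exactly the paper's first lemma), and control the non-derivation behaviour of $\mu$ through the commutator $d^{\mu}=\mu\circ I-I\circ(1\otimes\mu)$, i.e.\ through the identity $\mu\circ d_{\psi_2}(\psi_1)=d_{\psi_2}(\mu(\psi_1))+d^{\mu}_{\psi_2}(\psi_1)$ and the $I^{\mu}$ bookkeeping. Your part (i) is a genuine addition: the identities $\tau\circ d_\psi=\bar d_{\tau(\psi)}\circ\tau$ and $d_\psi+\bar d_\psi=-\operatorname{ad}_\psi$ are correct (both sides are derivations agreeing on generators), they do give $\tau\{\psi_1,\psi_2\}=-\{\tau\psi_1,\tau\psi_2\}$ and hence skew-symmetry of $\{\eta_1,\eta_2\}$, a point the paper's proof does not address explicitly. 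That part is complete and clean.

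The gap is in part (ii), which is the actual content of the theorem: you never carry out the verification that $E(\{\eta_1,\eta_2\})=0$, you only state that you ``expect'' the boundary and contraction terms to match and that this would be ``routine but delicate.'' That cancellation is not automatic and is where all the work lies. Concretely, the paper must compute (a) the three pieces $\{f|I^{\mu}(x_0;\psi;x_1)\}+\{f|I^{\mu}(x_1;\psi;x_0)\}$, $\{f|I^{\mu}(x_1;\psi)\}$, $\{f|I^{\mu}(\psi;x_1)\}$ in terms of $r_\psi$, $\dl_i$, $\dr_i$ using both forms of the defining equation; (b) $d_f(\mu(\psi))$; (c) the components $(\{\psi_1,\psi_2\})_{x_0}$ and ${}_{x_1}(\{\psi_1,\psi_2\})$ via the Leibniz rule, plus the observation that $r_{\{\psi_1,\psi_2\}}=0$ because the bracket contains no words of shape $x_0^nx_1$ (a fact you do not mention but which is needed for the right-hand side of the equation to vanish); and (d) $\mu([\psi_1,\psi_2])$, which produces cross terms such as $(\psi_1)_{x_1}x_1{}_{x_1}(\psi_2)$ and $(\psi_1)_{x_0}x_0{}_{x_0}(\psi_2)$ that must cancel against analogous terms coming from (a)--(c). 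Whether these cross terms cancel is precisely the theorem, and asserting that they should is not a proof. To close the gap you would need to write out these four computations and exhibit the cancellation explicitly, as the paper does in its Lemmas 3.2--3.6 and the final displayed sum.
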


\vspace{0.4 cm}
\begin{Rem}
For $\Phi$ a group-like element in $A$, the group version of the reduced coaction equation is the following
\begin{equation}\label{eq:group_reduced_coaction_ equation}
	d^{R}_1(\Phi)+\mu(\Phi)+d^{L}_0(\Phi)=-r_{\phi}(x_1)\Phi+\Phi r_{\phi}(-x_0)
\end{equation}
the infinitesimal equation is actually
\begin{equation}\label{eq:reduced_coaction_equation}
	d^R_1(\eta)+\mu(\eta)+d^L_0(\eta)=-r_{\eta}(x_1)+r_{\eta}(-x_0),
\end{equation}

The skew symmetric solutions of the \eqref{eq:reduced_coaction_equation} is denoted by $\mathfrak{rc}$. $\mathfrak{rc}$ contains a 1 dimensional vector space $[x_0,x_1]$, the subspace $\mathfrak{rc}_\lambda$ whose cofficents before the commutator $[x_0,x_1]$ is $\lambda$. It is  proved in \cite{AHNRS2025}, for $\eta\in \mathfrak{rc}_0$, the functions $r_{\eta}(x)$ is an even function. It is then equivalent to \eqref{variant_reduced_coaction_equation}.

The Lie algebra $\mathfrak{rc}_0$ has a close relation with the double shuffle Lie algebra $\mathfrak{dmr}_0$, Kashiwara-Vergne Lie algebra $\mathfrak{krv}_2$ and Grothendieck Teichmuller Lie algebra $\mathfrak{grt}_1$. As a direct consequence of Theorem 1.1 in \cite{AFR2024}, $\mathfrak{grt}_1$ satisfies \eqref{eq:reduced_coaction_equation} and $r_{\eta}$ is even function, thus $\mathfrak{grt}_1\in \overline{{\mathfrak{rc}}_0}.$ The detailed relations are studied in \cite{AHNRS2025}, and conjecturally we have the isomorphism between the Lie algebras $\mathfrak{grt}_1$, $\mathfrak{dmr}_0$, $\mathfrak{krv}_2$ and $\mathfrak{rc}_0$. Similar objects in \cite{AFR2024} are studied from the topological perpective in \cite{DrorYusuke}\footnote{It will be interesting to compare with $\mathfrak{grt}^{\rm{em}}_1$ in \cite{DrorYusuke} or symmetric $\mathfrak{krv}_2$ which use krv1 equation.}  and categorical perspective in \cite{RodrigoNaef}. We hope to investigate the relation to those perspective in the future. 
\end{Rem}

{\bf Acknowledgements.} I would like to thank A.~Alekseev, F.~Brown and F.~Naef, for teaching me those coactions. And thank D.~Bar-Natan, H.~Furusho, Y.~Kuno, G.~Massuyeau and P.~Severa  for inspiring questions and explain some questions in their papers. Part of the work is based on the talk "From generalized pentagon equations to reduced coaction equations" given at Dijon. I would like to thank M.~Fairon and T.~Kimura for invitation and hospitality.

This work is supported in part by the grants 08235 and 20040, and by the National Center for Competence in Research SwissMAP of the Swiss National Science Foundation.

\section{Commuting relation of two coactions}
We first recall Brown's formula for the coaction and its proof in Proposition 2.6 in \cite{Brown2013}. Then we use the same idea to have the formula for $D\circ \widehat{\mu}-(1\otimes \widehat{\mu})\circ D$.

For the consideration of the dual operation, the Ihara action $d_{\psi}$ can be written as sum of the following cases, namely multiply $S(\psi)$ to the words that begin with $x_1$, insert $\psi$ to every adjacent $x_0x_1$, $S(\psi)$ to every adjacent $x_1x_0$ and multiply $\psi$ to the words that ends in $x_1$. More formally,
\begin{align*}
&(1)  \{\varepsilon_1\ldots\varepsilon_n|I(\psi;x_1)\}:={\bf{S(\psi)x_1}}d^{R}_1(\varepsilon_1\ldots \varepsilon_n)\\
&(2) \{\varepsilon_1\ldots\varepsilon_n|I(x_0;\psi;x_1)\}:=\sum_{\substack{1\le i\le n-1,\varepsilon_i=x_0,\varepsilon_{i+1}=x_1}}\varepsilon_1\dots\varepsilon_{i-1}{\bf{x_0S(\psi)x_1}}\varepsilon_{i+2}\dots\varepsilon_n\\
&(3) \{\varepsilon_1\ldots\varepsilon_n|I(x_1;\psi;x_0)\}:=\sum_{\substack{1\le i\le n-1\varepsilon_i=x_1,\varepsilon_{i+1}=x_0}}\varepsilon_1\dots\varepsilon_{i-1}{\bf{x_1\psi x_0}}\varepsilon_{i+2}\dots\varepsilon_n\\
&(4) \{\varepsilon_1\ldots\varepsilon_n|I(x_0;\psi;x_0)\}:=\sum_{\substack{1\le i\le n-1\varepsilon_i=x_0,\varepsilon_{i+1}=x_0}}\varepsilon_1\dots\varepsilon_{i-1}{\bf{x_0 0 x_0}}\varepsilon_{i+2}\dots\varepsilon_n\\
&(5) \{\varepsilon_1\ldots\varepsilon_n|I(x_1;\psi;x_1)\}:=\sum_{\substack{1\le i\le n-1\varepsilon_i=x_1,\varepsilon_{i+1}=x_1}}\varepsilon_1\dots\varepsilon_{i-1}{\bf{x_1 0 x_1}}\varepsilon_{i+2}\dots\varepsilon_n\\
&(6) 
\{\varepsilon_1\ldots\varepsilon_n|I(x_1;\psi)\}:=d^{L}_1(\varepsilon_1\ldots \varepsilon_n){\bf{x_1\psi}}\\
\end{align*}

The Goncharov-Brown coaction $D$ is the operation dual to the Ihara action map. We first recall the following notation  
\begin{equation}
	I(\varepsilon_p;f;\varepsilon_q)=\begin{cases}
		&S(f),\quad (\varepsilon_p,\varepsilon_q)=(x_0,x_1)\\
		&f,\quad (\varepsilon_p,\varepsilon_q)=(x_1,x_0)\\
		&\varepsilon(f) \quad \varepsilon_p=\varepsilon_q
	\end{cases}
\end{equation}

\begin{Prop}[\cite{Brown2014},Proposition 2.6]
	For $\varepsilon_1,\dots,\varepsilon_n\in \{x_0,x_1\}$, the explicit formula for the coaction $D:\widehat{A} \to \mathcal{L}\otimes \widehat{A}$ is
\begin{align}
	&D(\varepsilon_1\ldots \varepsilon_n)=\sum_{0\le p<q\le n}[I(\varepsilon_p;\varepsilon_{p+1}\dots\varepsilon_{q};\varepsilon_{q+1})]\otimes \varepsilon_1\dots \varepsilon_p\varepsilon_{q+1}\dots \varepsilon_n
\end{align}
here the square bracket $[~]$ is the projection from $\widehat{A} \to \mathcal{L}$, we also set $\varepsilon_0=x_1,\varepsilon_{n+1}=x_0$. 
\end{Prop}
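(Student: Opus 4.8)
The approach is to compute $D$ directly as the transpose of the Ihara action under the coefficient pairing $\langle\varphi,w\rangle=c_w(\varphi)$, following Brown. The key observation is that the transpose of an insertion operation is an extraction operation: if the action produces a word $w$ by inserting a Lie series into one of the gaps of a shorter word $u$, then in the transpose we fix $w=\varepsilon_1\ldots\varepsilon_n$ and sum over all ways of removing a contiguous block $\varepsilon_{p+1}\ldots\varepsilon_q$, recording the removed block in the $\mathcal L$-slot and leaving the complement $\varepsilon_1\ldots\varepsilon_p\varepsilon_{q+1}\ldots\varepsilon_n$ in the $\widehat A$-slot. The relevant action here is the module action $\psi\circ u=\psi\,u+d_\psi(u)$, whose derivation part $d_\psi$ is the map recalled above and whose transpose is $D$. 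Organizing the six elementary pieces together with the left-concatenation term, the action on $w$ amounts to bordering $w$ with virtual letters $\varepsilon_0=x_1$ on the left and $\varepsilon_{n+1}=x_0$ on the right and inserting $+\psi$ into each gap flanked by $(x_1,x_0)$ and $-\psi$ into each gap flanked by $(x_0,x_1)$, inserting nothing into $(x_1,x_1)$- and $(x_0,x_0)$-gaps.

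Two elementary identities then convert the bare extracted blocks into the notation $I(\varepsilon_p;\cdot;\varepsilon_{q+1})$ and guarantee the image lies in $\mathcal L\otimes\widehat A$. First, a Lie series $\psi$ pairs with a class $[w]\in\mathcal L=\ker\varepsilon/(\ker\varepsilon)^2$ by $\langle\psi,[w]\rangle=c_w(\psi)$, because Lie elements annihilate the shuffle-decomposables $(\ker\varepsilon)^2$; this is what forces the projection $[\,\cdot\,]$. Second, $[S(w)]=-[w]$ in $\mathcal L$, equivalently $c_{S(w)}(\psi)=-c_w(\psi)$ for Lie $\psi$, which follows from $S(w)=(-1)^{|w|}\widetilde w$ together with $\widetilde\psi=(-1)^{|\psi|-1}\psi$ on homogeneous Lie elements. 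With these, a block $B=\varepsilon_{p+1}\ldots\varepsilon_q$ extracted from a $(x_1,x_0)$-gap contributes $[B]=[I(x_1;B;x_0)]$, one from a $(x_0,x_1)$-gap contributes $-[B]=[S(B)]=[I(x_0;B;x_1)]$, and one with equal flanks contributes $[\varepsilon(B)]=0=[I(x_1;B;x_1)]=[I(x_0;B;x_0)]$. Thus every contribution takes the uniform shape $[I(\varepsilon_p;B;\varepsilon_{q+1})]\otimes(\varepsilon_1\ldots\varepsilon_p\varepsilon_{q+1}\ldots\varepsilon_n)$.

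Summing over all $0\le p<q\le n$ then yields the claimed formula, once the boundary terms are accounted for. The interior terms (those with $1\le p$ and $q\le n-1$) come from the derivation $d_\psi$, i.e.\ from cases (2)--(5); the right boundary $q=n$ comes from case (6) and from the insertion just after a terminal $x_1$; and the left boundary $p=0$, including the whole-word term $[I(x_1;w;x_0)]\otimes 1=[w]\otimes 1$, comes from case (1) together with the left-concatenation of $\psi$. All of this is precisely what the conventions $\varepsilon_0=x_1$ and $\varepsilon_{n+1}=x_0$ are designed to encode.

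The main obstacle is the boundary bookkeeping together with the antipode signs. One must check that the two exterior operations, and the left-concatenation piece of the action that is invisible to the plain interior Leibniz rule, transpose to exactly the $p=0$ and $q=n$ terms with the borders $\varepsilon_0=x_1$, $\varepsilon_{n+1}=x_0$ --- neither double-counting a block nor dropping the whole-word term --- and that the antipode surfaces in precisely the $(x_0,x_1)$-slots, where it is absorbed by the pairing with $\psi$ through $[S(B)]=-[B]$. Once each elementary piece has been transposed, assembling the contributions into the stated sum over $0\le p<q\le n$ is a careful but routine transcription.
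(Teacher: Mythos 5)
Your proposal is correct and takes essentially the same route as the paper, whose own two-sentence proof is exactly this duality argument: the six elementary pieces of the Ihara action are matched, under the coefficient pairing, to the terms of the sum, with the boundary cases encoded by $\varepsilon_0=x_1$, $\varepsilon_{n+1}=x_0$. Your write-up actually supplies details the paper leaves implicit --- in particular that one must transpose the full module action $\psi\circ u=\psi u+d_\psi(u)$ (the left-concatenation term is what combines with case (1) to produce the $p=0$ row, including the term $[w]\otimes 1$), and the identity $[S(B)]=-[B]$ on $\mathcal{L}$ that absorbs the antipode in the $(x_0,x_1)$-slots.
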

 
\begin{proof}
The defintion of the notation $I(\varepsilon_p;\varepsilon_{p+1}\dots\varepsilon_{q};\varepsilon_{q+1})$ is dual to the above cases (2),(3),(4),(5). And the requirement $\varepsilon_0=x_1,\varepsilon_{n+1}=x_0$ is dual to the cases (1) and (6).
\end{proof}

We follow the same idea to study the operation $d^{\mu}=\mu\circ \rm{I}-I\circ (1\otimes \mu)$, this map from $\mathfrak{fr}_k( x_0,x_1)\times A$ to $A$ can be written as sum of the following cases, for an element $\psi\in \mathfrak{fr}_k(x_0,x_1)$, $d^{\mu}_{\psi}$ is written as
\begin{align*}
	&(1)  \{\varepsilon_1\ldots\varepsilon_n|I^{\mu}(\psi;x_1)\}:={\bf{(\mu(S(\psi))+d^{L}_1(S(\psi)))}x_1}d^{R}_1(\varepsilon_1\ldots \varepsilon_n)\\
	&(2) \{\varepsilon_1\ldots\varepsilon_n|I^{\mu}(x_0;f;x_1)\}:=\\
	&\sum_{\substack{1\le i\le n-1,\varepsilon_i=x_0,\varepsilon_{i+1}=x_1}}\varepsilon_1\dots\varepsilon_{i-1}{\bf{x_0(d^R_0(S(\psi))+\mu(S(\psi))+d^L_1(S(\psi)))x_1}}\varepsilon_{i+2}\dots\varepsilon_n\\
	&(3) \{\varepsilon_1\ldots\varepsilon_n|I^{\mu}(x_1;f;x_0)\}:=\\
	&\sum_{\substack{1\le i\le n-1\varepsilon_i=x_1,\varepsilon_{i+1}=x_0}}\varepsilon_1\dots\varepsilon_{i-1}{\bf{x_1(d_1^{R}(\psi)+\mu(\psi)+d^L_0(\psi)) x_0}}\varepsilon_{i+2}\dots\varepsilon_n\\
	&(4) \{\varepsilon_1\ldots\varepsilon_n|I^{\mu}(x_0;\psi;x_0)\}:=\sum_{\substack{1\le i\le n-1\varepsilon_i=x_1,\varepsilon_{i+1}=x_0}}\varepsilon_1\dots\varepsilon_{i-1}{\bf{x_0 0 x_0}}\varepsilon_{i+2}\dots\varepsilon_n\\
	&(5) \{\varepsilon_1\ldots\varepsilon_n|I^{\mu}(x_1;\psi;x_1)\}:=\sum_{\substack{1\le i\le n-1\varepsilon_i=x_1,\varepsilon_{i+1}=x_0}}\varepsilon_1\dots\varepsilon_{i-1}{\bf{x_1 0 x_1}}\varepsilon_{i+2}\dots\varepsilon_n\\
	&(6) 
	\{\varepsilon_1\ldots\varepsilon_n|I^{\mu}(x_1;\psi)\}:=d^{L}_1(\varepsilon_1\ldots \varepsilon_n){\bf{x_1(d^R_1(\psi)+\mu(\psi))}}\\
\end{align*}

We introduce the following notation:
\begin{equation}\label{eq:definition_mu_I}
	I^{\mu}(\varepsilon_p;f;\varepsilon_q)=\begin{cases}
		&(\mu(S(f))+d^{L}_1(S(f))),\quad p=0\\
		&d^R_0(S(f))+\mu(S(f))+d^L_1(S(f)),\quad (\varepsilon_p,\varepsilon_q)=(x_0,x_1)\\
		&d_1^{R}(f)+\mu(f)+d^L_0(f),\quad (\varepsilon_p,\varepsilon_q)=(x_1,x_0)\\
		&\varepsilon(f) \quad \varepsilon_p=\varepsilon_q\\
		&(d^R_1(f)+\mu(f)),\quad q=n+1
	\end{cases}
\end{equation} 

The map dual to $d^{\mu}:\mu\circ \rm{I}-\rm{I}\circ (1\otimes \mu)$ is the map $D^{\mu}:D\circ \widehat{\mu}-(1\otimes \widehat{\mu})\circ D$ and it has the following formula.
\begin{Th}
For $\varepsilon_1,\dots,\varepsilon_n\in \{x_0,x_1\}$, we have
\begin{multline}
D\circ \widehat{\mu}-(1\otimes \widehat{\mu})\circ D (\varepsilon_1\ldots \varepsilon_n)\\
=\sum_{0\le p<q\le n}[I^{\mu}(\varepsilon_p;\varepsilon_{p+1}\dots\varepsilon_{q};\varepsilon_{q+1})]\otimes \varepsilon_1\dots \varepsilon_p\varepsilon_{q+1}\dots \varepsilon_n
\end{multline}
\end{Th}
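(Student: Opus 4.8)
The plan is to prove the identity exactly as Brown proves the formula for $D$ in the Proposition recalled above, namely by transposition with respect to the coefficient pairing $\langle\varphi,w\rangle=c_w(\varphi)$. Recall that under this pairing $D$ is the transpose of the Ihara action $I$, and that $\widehat\mu$ is by definition the transpose of $\mu$. Since transposition reverses the order of composition, the map $D^\mu=D\circ\widehat\mu-(1\otimes\widehat\mu)\circ D$ is precisely the transpose of $d^\mu=\mu\circ I-I\circ(1\otimes\mu)$. Thus the theorem reduces to two steps: (i) confirming the six-case form of $d^\mu_\psi$ displayed above, and (ii) reading off its transpose, just as $D$ is read off from the six cases of $I$.

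For step (i) I would evaluate $d^\mu_\psi(w)=\mu(d_\psi(w))-d_\psi(\mu(w))$ on a word $w=\varepsilon_1\dots\varepsilon_n$ and track how the single contractions making up $\mu$ interact with the insertions making up $d_\psi$. Writing $d_\psi(w)$ as the sum over positions $j$ with $\varepsilon_j=x_1$ of the two insertions $x_1\psi$ and $-\psi x_1$ (here $S(\psi)=-\psi$ for a Lie element, which is the origin of the antipodes), I would split the contractions occurring in $\mu(d_\psi(w))$ into \emph{bulk} contractions, internal to the background word $\varepsilon_1\dots\varepsilon_n$, and \emph{junction} contractions, which meet an inserted copy of $\psi$. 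The bulk contractions cancel against the matching terms of $d_\psi(\mu(w))$, since contracting and inserting at disjoint positions commute; what survives are the junction contributions attached to each adjacent pair $(\varepsilon_i,\varepsilon_{i+1})$, together with the contractions internal to an inserted $\psi$, which $d_\psi(\mu(w))$ never sees because there $\mu$ acts before insertion.

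Assembling the surviving terms pair by pair then reproduces the six cases. For an adjacent $x_0x_1$ the inserted $S(\psi)$ contributes its internal contractions $\mu(S(\psi))$, its left-junction contraction with the flanking $x_0$ (producing $d^R_0(S(\psi))$) and its right-junction contraction with the flanking $x_1$ (producing $d^L_1(S(\psi))$), which is exactly the middle factor of case $(2)$; the pair $x_1x_0$ gives case $(3)$ with $\psi$ replacing $S(\psi)$; the pairs $x_0x_0$ and $x_1x_1$ contribute nothing (cases $(4),(5)$), the former because no insertion lands between them and the latter because the two insertions cancel identically; and the two ends of $w$ give cases $(1)$ and $(6)$, with only one junction each and hence only the $\mu$- and single-derivative terms. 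Dualizing this decomposition term by term, an insertion site flanked by $(\varepsilon_p,\varepsilon_{q+1})$ becomes the extraction of the contiguous subword $\varepsilon_{p+1}\dots\varepsilon_q$ weighted by the factor $I^\mu(\varepsilon_p;\varepsilon_{p+1}\dots\varepsilon_q;\varepsilon_{q+1})$ of \eqref{eq:definition_mu_I}, the remaining letters forming $\varepsilon_1\dots\varepsilon_p\varepsilon_{q+1}\dots\varepsilon_n$, and cases $(1),(6)$ are encoded by the conventions $\varepsilon_0=x_1$, $\varepsilon_{n+1}=x_0$.

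The main obstacle is the bookkeeping in step (i): one must check rigorously that \emph{every} bulk contraction cancels between $\mu\circ d_\psi$ and $d_\psi\circ\mu$—including the delicate configurations where a background contraction sits immediately next to an insertion site—so that only the junction terms remain, while tracking the antipodes and the operators $d^R$, $d^L$, $\mu$ consistently through the two orders of composition. Once this localization to junctions is established, step (ii) is purely formal and identical to the dualization in the proof of the Proposition above.
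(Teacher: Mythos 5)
Your proposal is correct and follows essentially the same route as the paper: identify $D\circ\widehat{\mu}-(1\otimes\widehat{\mu})\circ D$ as the transpose of $\mu\circ I-I\circ(1\otimes\mu)$, decompose the latter into the six local cases defining $I^{\mu}$, and dualize exactly as in Brown's formula for $D$. In fact your step (i) supplies the bulk-versus-junction cancellation argument for the six-case decomposition that the paper merely asserts, so you have, if anything, given more detail than the published proof.
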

\begin{proof}
The defintion of the notation $I^{\mu}(\varepsilon_p;\varepsilon_{p+1}\dots\varepsilon_{q};\varepsilon_{q+1})$ is dual to the 6 cases of the action $\mu\circ \rm{I}-\rm{I}\circ (1\otimes \mu)$.
\end{proof}

\section{Reduced coaction Lie algebra}
We prove the Theorem \ref{th:Ihara_bracket} in this section. 

\begin{Lemma}
	By the skew-symmetric property, for $\eta\in \overline{{\mathfrak{rc}}_0}$, it satisfies
	\begin{equation}\label{eq:skew_reduced_equation}
		d^{R}_0(\eta)+\mu(\eta)+d^{L}_1(\eta)=-r_{\eta}(x_1)+r_{\eta}(x_0)
	\end{equation}
\end{Lemma}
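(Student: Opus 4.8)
The plan is to exploit the letter-swap symmetry. Let $\sigma$ denote the $k$-algebra automorphism of $k\langle\langle x_0,x_1\rangle\rangle$ determined by $\sigma(x_0)=x_1$ and $\sigma(x_1)=x_0$; the skew-symmetry hypothesis $\eta(x_0,x_1)=-\eta(x_1,x_0)$ is precisely the statement $\sigma(\eta)=-\eta$. The strategy is to apply $\sigma$ to the reduced coaction equation \eqref{variant_reduced_coaction_equation}, which $\eta$ satisfies since $\eta\in\overline{{\mathfrak{rc}}_0}$, and to show that after using $\sigma(\eta)=-\eta$ it becomes exactly \eqref{eq:skew_reduced_equation}.

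First I would record how $\sigma$ intertwines with the operators appearing in \eqref{variant_reduced_coaction_equation}. Comparing the two defining decompositions $x=\varepsilon(x)+\sum_i x_i\, d^R_i(x)=\varepsilon(x)+\sum_i d^L_i(x)\, x_i$ with their images under $\sigma$ yields $\sigma\circ d^R_1=d^R_0\circ\sigma$ and $\sigma\circ d^L_0=d^L_1\circ\sigma$ (together with the swapped versions), since $\sigma$ simply exchanges the roles of the two letters being stripped off. For $\mu$, the contraction $x_i\odot x_j=\delta_{x_i,x_j}x_i$ depends only on whether two adjacent letters coincide, a condition preserved by $\sigma$, and it returns the common letter; hence $\sigma\circ\mu=\mu\circ\sigma$. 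Finally, since the coefficients $c_{x_0^{l+1}x_1}(\eta)$ are scalars fixed by $\sigma$ while $\sigma$ sends the substituted power $x_1^{l+1}$ to $x_0^{l+1}$, one obtains $\sigma(r_{\eta}(x_1))=r_{\eta}(x_0)$ and $\sigma(r_{\eta}(x_0))=r_{\eta}(x_1)$.

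With these relations in hand, applying $\sigma$ to both sides of \eqref{variant_reduced_coaction_equation} turns the left-hand side into $d^R_0(\sigma(\eta))+\mu(\sigma(\eta))+d^L_1(\sigma(\eta))$ and the right-hand side into $-r_{\eta}(x_0)+r_{\eta}(x_1)$. Substituting $\sigma(\eta)=-\eta$ and clearing the overall sign gives $d^R_0(\eta)+\mu(\eta)+d^L_1(\eta)=-r_{\eta}(x_1)+r_{\eta}(x_0)$, which is \eqref{eq:skew_reduced_equation}. The computation is essentially bookkeeping; the only point requiring care — the main, if modest, obstacle — is the verification that $\mu$ commutes with $\sigma$ and that $r_{\eta}$ transforms by swapping the substituted variable. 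Both follow directly from the definitions, once one checks that no asymmetry between $x_0$ and $x_1$ is concealed in the contraction $\odot$ or in the extraction of the coefficients $c_{x_0^{l+1}x_1}(\eta)$.
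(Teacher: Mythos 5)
Your proposal is correct and is essentially the paper's own argument: the paper likewise applies the letter-swap automorphism (called $\tau$ there) to the reduced coaction equation, using $\tau\circ\mu=\mu\circ\tau$ and the identities $\tau\circ d^R_1(\eta)=-d^R_0(\eta)$, $\tau\circ d^L_0(\eta)=-d^L_1(\eta)$ coming from skew-symmetry. Your write-up just makes the intertwining relations and the transformation of $r_\eta$ more explicit.
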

\begin{proof}
The operation $\mu$ commutes with the algebra automorphism $\tau$ that exchange $x_0$ and $x_1$, $\tau\circ \mu=\mu\circ \tau$. We apply $\tau$ to the equation \eqref{eq:reduced_coaction_equation}. With the skew-symmetric property, we have $\tau\circ d^{R}_1(\eta)=-d^R_0(\eta)$, $\tau\circ d^L_0(\eta)=-d^L_1(\eta)$.
\end{proof}

With the notations induced as before, we have
\begin{equation*}
d_{\psi_2}(\psi_1)=\{\psi_1|I(\psi_2;x_1)\}+\{\psi_1|I(x_0;\psi_2;x_1)\}+\{\psi_1|I(x_1;\psi_2;x_0)\}+\{\psi_1|I(x_1;\psi_2)\}
\end{equation*}
\begin{equation*}
d^{\mu}_{\psi_2}(\psi_1)=\{\psi_1|I^{\mu}(\psi_2;x_1)\}+\{\psi_1|I^{\mu}(x_0;\psi_2;x_1)\}+\{\psi_1|I^{\mu}(x_1;\psi_2;x_0)\}+\{\psi_1|I^{\mu}(x_1;\psi_2)\}
\end{equation*}

\begin{Lemma}\label{lemma:calculation_d_mu}
Let $\psi\in \overline{{\mathfrak{rc}}_0}$ and $f\in A$ with $c_{x^n_0}(f)=c_{x^n_1}(f)=\varepsilon(f)=0$, $n\ge 1$, we have
\begin{align*}
&\{f|I^{\mu}(x_0;\psi;x_1)\}+\{f|I^{\mu}(x_1;\psi;x_0)\}\\
&=-r_{\psi}(x_1)x_1d^R_1(f)+d^L_1(f)x_1 r_{\psi}(x_1)-r_{\psi}(x_0)x_0d^R_{0}(f)+d^L_0(f)x_0r_{\psi}(x_0)\\
&\{f|I^{\mu}(x_1;\psi)\}=d^L_1(f)x_1(-r_{\psi}(x_1)+r_{\psi}(x_0)-d^L_0(\psi)-d^R_1(\psi))+d^L_1(f)x_1d^R_1(\psi)\\
&\{f|I^{\mu}(\psi;x_1)\}=-(-r_{\psi}(x_1)+r_{\psi}(x_0)-d^L_0(\psi)-d^R_1(\psi))x_1d^R_1(f)-d^L_1(\psi)x_1d^R_1(f)\\
\end{align*}
\end{Lemma}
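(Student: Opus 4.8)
The plan is to prove Lemma \ref{lemma:calculation_d_mu} by directly unwinding the definitions in \eqref{eq:definition_mu_I} of the notation $I^{\mu}$ and then applying the reduced coaction equation \eqref{eq:reduced_coaction_equation} together with its skew-symmetric variant \eqref{eq:skew_reduced_equation} to eliminate the combinations $d^R_1(\psi)+\mu(\psi)+d^L_0(\psi)$ and $d^R_0(\psi)+\mu(\psi)+d^L_1(\psi)$. The key observation is that each of the three asserted identities packages exactly one of the $I^{\mu}$-cases, so the strategy is to treat them one at a time, substitute the relevant equation, and collect terms.

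First I would handle the two middle cases jointly. By the definition of $d^{\mu}_{\psi}$ and the notation $I^{\mu}(x_0;\psi;x_1)$ and $I^{\mu}(x_1;\psi;x_0)$, we have
\begin{align*}
\{f|I^{\mu}(x_0;\psi;x_1)\}&=\sum_{\varepsilon_i=x_0,\varepsilon_{i+1}=x_1}\varepsilon_1\dots\varepsilon_{i-1}\,x_0\bigl(d^R_0(S(\psi))+\mu(S(\psi))+d^L_1(S(\psi))\bigr)x_1\,\varepsilon_{i+2}\dots\varepsilon_n,\\
\{f|I^{\mu}(x_1;\psi;x_0)\}&=\sum_{\varepsilon_i=x_1,\varepsilon_{i+1}=x_0}\varepsilon_1\dots\varepsilon_{i-1}\,x_1\bigl(d^R_1(\psi)+\mu(\psi)+d^L_0(\psi)\bigr)x_0\,\varepsilon_{i+2}\dots\varepsilon_n.
\end{align*}
Since $\psi$ is skew-symmetric of odd degree we have $S(\psi)=-\psi$, so the first inner expression equals $-(d^R_0(\psi)+\mu(\psi)+d^L_1(\psi))$, which by \eqref{eq:skew_reduced_equation} equals $r_{\psi}(x_1)-r_{\psi}(x_0)$, while the second inner expression equals $-r_{\psi}(x_1)+r_{\psi}(x_0)$ by \eqref{eq:reduced_coaction_equation}. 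Because $r_{\psi}(x)$ is a power series with no constant or linear term, the resulting sum over all adjacent $x_0x_1$ and $x_1x_0$ pairs telescopes: replacing the inserted block by $r_{\psi}(x_0)$ or $r_{\psi}(x_1)$ and reorganizing by whether the neighbouring letter is absorbed on the left or the right reconstitutes $d^R_1(f)$, $d^L_1(f)$, $d^R_0(f)$ and $d^L_0(f)$ exactly as in the claimed formula. The main bookkeeping is checking signs and confirming which of the four boundary-derivation operators each telescoped term produces.

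Next I would treat the two boundary cases $\{f|I^{\mu}(x_1;\psi)\}$ and $\{f|I^{\mu}(\psi;x_1)\}$. For the first, the definition gives $\{f|I^{\mu}(x_1;\psi)\}=d^L_1(f)\,x_1(d^R_1(\psi)+\mu(\psi))$; I would then add and subtract $d^L_0(\psi)$ inside the parenthesis to form the left-hand side of \eqref{eq:reduced_coaction_equation}, writing $d^R_1(\psi)+\mu(\psi)=\bigl(d^R_1(\psi)+\mu(\psi)+d^L_0(\psi)\bigr)-d^L_0(\psi)$ and substituting the equation for the bracketed combination, which yields precisely the stated expression. The case $\{f|I^{\mu}(\psi;x_1)\}$ is dual: using $S(\psi)=-\psi$ in the definition $\{f|I^{\mu}(\psi;x_1)\}=(\mu(S(\psi))+d^L_1(S(\psi)))x_1 d^R_1(f)=-(\mu(\psi)+d^L_1(\psi))x_1 d^R_1(f)$, I would again complete the combination to the left-hand side of \eqref{eq:skew_reduced_equation} by inserting the missing $d^R_0(\psi)$ term and substituting.

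I expect the main obstacle to be the telescoping argument in the two middle cases, where one must verify that summing the locally inserted blocks $r_\psi(x_i)$ over all adjacent letter-pairs of $f$ collapses, after accounting for boundary contributions, into the four global operators $d^R_1, d^L_1, d^R_0, d^L_0$ with the correct signs. This is where the hypotheses $c_{x^n_0}(f)=c_{x^n_1}(f)=\varepsilon(f)=0$ become essential, since they guarantee no stray boundary terms survive. The substitutions themselves are mechanical once the two forms \eqref{eq:reduced_coaction_equation} and \eqref{eq:skew_reduced_equation} of the reduced coaction equation are in hand, so the real content is the careful combinatorial reorganization rather than any deep structural input.
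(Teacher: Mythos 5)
Your proposal is correct and follows essentially the same route as the paper's proof: unwind the definition \eqref{eq:definition_mu_I}, substitute the reduced coaction equation and its skew variant \eqref{eq:skew_reduced_equation} into the inserted blocks, and collapse the resulting insertion sums using that $r_{\psi}(x_i)$ commutes with the adjacent letter $x_i$. One small correction: $S(\psi)=-\psi$ holds because $\psi$ is a Lie (hence primitive) element, not because of skew-symmetry or odd degree.
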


\begin{proof}
\begin{align*}
&\{f|I^{\mu}(x_0;\psi;x_1)\}+\{f|I^{\mu}(x_1;\psi;x_0)\}\\
&=\{f|I(x_0; -d^R_0(\psi)-\mu(\psi)-d^L_1(\psi);x_1)\}+\{f|I(x_1;d^R_1(\psi)+\mu(\psi)+d^L_0(\psi));x_0\}\\
&=\{f|I(x_0; r_{\psi}(x_1)-r_{\psi}(x_0);x_1)\}+\{f|I(x_1;-r_{\psi}(x_1)+r_{\psi}(x_0);x_0\}\\
&=-r_{\psi}(x_1)x_1d^R_1(f)+d^L_1(f)x_1 r_{\psi}(x_1)-r_{\psi}(x_0)x_0d^R_{0}(f)+d^L_0(f)x_0r_{\psi}(x_0).
\end{align*}
The second equility is by definition, the third equility is by \eqref{eq:reduced_coaction_equation} and \eqref{eq:skew_reduced_equation}, the last equility is because $c_{x^n_0}(f)=0,c_{x^n_1}(f)=0$, then the functions $r_{\psi}(x)$ commutes to the leftmost and rightmost.

For the other two equations, $\{f|I^{\mu}(x_1;\psi)\}=d^L_1(f)x_1(\mu(\psi)+d^R_1(\psi))$ and $\{f|I^{\mu}(\psi;x_1)\}=(-d^L_1(\psi)-\mu(\psi))x_1d^R_1(f)$
\end{proof}

\begin{Lemma}\label{lemma:calculation_mu}
Let $\psi\in \overline{{\mathfrak{rc}}_0}$ and $f\in \mathfrak{fr}_k(x_0,x_1)$, we have
\begin{align*}
d_{f}(\mu(\psi))&=d_{f}(-r_{\psi}(x_1)+r_{\psi}(x_0)-d^L_0(\psi)-d^R_1(\psi))\\
&=[-r_{\psi}(x_1),f]-d_{f}(d^L_0(\psi)+d^R_1(\psi)),
\end{align*}
\end{Lemma}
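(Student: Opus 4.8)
The plan is to obtain the first equality by pure substitution and to reduce the second to an elementary derivation computation on power series in a single letter. Since $\psi\in\overline{\mathfrak{rc}_0}$, it satisfies the reduced coaction equation \eqref{variant_reduced_coaction_equation}, which rearranges to $\mu(\psi)=-r_\psi(x_1)+r_\psi(x_0)-d^L_0(\psi)-d^R_1(\psi)$. Applying the Ihara derivation $d_f$ to both sides and using its linearity immediately produces the first line of the claim; no further input is needed there.

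For the second equality I would split the right-hand side by linearity of $d_f$ as $d_f(-r_\psi(x_1))+d_f(r_\psi(x_0))-d_f(d^L_0(\psi)+d^R_1(\psi))$. The last summand already matches the target expression, so the statement reduces to showing $d_f(-r_\psi(x_1))+d_f(r_\psi(x_0))=[-r_\psi(x_1),f]$. The key structural observation is the shape of $r_\psi$: by definition $r_\psi(x)=\sum_{l\ge 0}c_{x_0^{l+1}x_1}(\psi)\,x^{l+1}$, so $r_\psi(x_0)$ is a power series in the single letter $x_0$ while $r_\psi(x_1)$ is a power series in the single letter $x_1$. Since $d_f$ is a derivation with $d_f(x_0)=0$, it annihilates every power of $x_0$, hence $d_f(r_\psi(x_0))=0$. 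For the $x_1$-part I would use $d_f(x_1)=[x_1,f]$ together with the Leibniz rule: expanding $d_f(x_1^m)=\sum_{i=0}^{m-1}x_1^i[x_1,f]x_1^{m-1-i}$ gives a telescoping sum equal to $[x_1^m,f]$ for every $m\ge 1$. Summing against the coefficients $c_{x_0^{l+1}x_1}(\psi)$ then yields $d_f(r_\psi(x_1))=[r_\psi(x_1),f]$, and combining the two parts gives the claimed identity.

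There is no deep obstacle here; the argument is essentially bookkeeping. The one point deserving care is that $d_f(g)=[g,f]$ is \emph{not} valid for general $g$: the derivation $h\mapsto[h,f]$ agrees with $d_f$ on the generator $x_1$ but disagrees on $x_0$ (where $d_f$ vanishes while $[x_0,f]$ does not). The computation works only because $d_f$ is applied to $r_\psi(x_1)$, which lies in the subalgebra generated by $x_1$; I would therefore make explicit that $r_\psi(x_1)$ contains no $x_0$, so that the telescoping identity applies verbatim.
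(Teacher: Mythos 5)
Your proposal is correct: the first equality is exactly the rearranged defining equation \eqref{variant_reduced_coaction_equation} for $\overline{\mathfrak{rc}_0}$, and the second follows because $d_f$ kills any power series in $x_0$ alone while the telescoping Leibniz computation gives $d_f(x_1^m)=[x_1^m,f]$, hence $d_f(r_\psi(x_1))=[r_\psi(x_1),f]$. The paper states this lemma without proof, and your argument --- including the caveat that $g\mapsto[g,f]$ agrees with $d_f$ only on the subalgebra generated by $x_1$ --- is precisely the intended bookkeeping.
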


The notations $d^R_i$ and $d^L_i$ are similar to that of $d_{\psi}$, to avoid confusion in the following calculations, we also use the following notation. For any $x\in A$, we write
\begin{equation*}
	x=\varepsilon(x)+(x)_{x_0}x_0+(x)_{x_1}x_1=\varepsilon(x)+x_0{}_{x_0}(x)+x_1{}_{x_1}(x),
\end{equation*}
namely $d^R_i(x)={}_{x_i}(x)$ and $d^L_i(x)=(x)_{x_i}$.

\begin{Lemma}\label{lem:mid_1}
	Suppose that $\psi_1,\psi_2$ are in $\overline{{\mathfrak{rc}}_0}$, then we have
	\begin{align*}
		\mu\circ d_{\psi_2}(\psi_1)&=[\psi_1,r_{\psi_2}(x_0)]+[-r_{\psi_1}(x_1),\psi_2]-d_{\psi_2}((\psi_1)_{x_0}+{}_{x_1}(\psi_1))\\
		&+(\psi_2)_{x_0}x_1{}_{x_1}(\psi_1)+{}_{x_1}(\psi_2)x_1{}_{x_1}(\psi_1)-(\psi_2)_{x_1}x_1{}_{x_1}(\psi_1)\\
		&-(\psi_1)_{x_1}x_1(\psi_2)_{x_0}
	\end{align*}
\end{Lemma}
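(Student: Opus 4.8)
The plan is to reduce $\mu\circ d_{\psi_2}(\psi_1)$ to the two quantities already computed in Lemmas \ref{lemma:calculation_d_mu} and \ref{lemma:calculation_mu}, using the very definition of the operation $d^{\mu}=\mu\circ I-I\circ(1\otimes\mu)$ from Section 2. Applying that definition to the pair $(\psi_2,\psi_1)$ gives
\begin{equation*}
\mu\circ d_{\psi_2}(\psi_1)=d^{\mu}_{\psi_2}(\psi_1)+d_{\psi_2}(\mu(\psi_1)).
\end{equation*}
The second summand is precisely Lemma \ref{lemma:calculation_mu} with $f=\psi_2$ and $\psi=\psi_1$, namely $[-r_{\psi_1}(x_1),\psi_2]-d_{\psi_2}((\psi_1)_{x_0}+{}_{x_1}(\psi_1))$, which already supplies the second and third terms on the right-hand side of the claim. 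It therefore remains to prove that $d^{\mu}_{\psi_2}(\psi_1)$ equals $[\psi_1,r_{\psi_2}(x_0)]$ together with the four quadratic terms in the last two lines of the statement.

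To compute $d^{\mu}_{\psi_2}(\psi_1)$ I would use its decomposition into the four nonzero cases recorded just before this lemma,
\begin{equation*}
d^{\mu}_{\psi_2}(\psi_1)=\{\psi_1|I^{\mu}(\psi_2;x_1)\}+\{\psi_1|I^{\mu}(x_0;\psi_2;x_1)\}+\{\psi_1|I^{\mu}(x_1;\psi_2;x_0)\}+\{\psi_1|I^{\mu}(x_1;\psi_2)\},
\end{equation*}
and substitute the explicit expressions from Lemma \ref{lemma:calculation_d_mu}, taking $f=\psi_1$ and $\psi=\psi_2$. This substitution is legitimate because $\psi_1\in\overline{{\mathfrak{rc}}_0}$ is a Lie series with $c_{x_0}(\psi_1)=c_{x_1}(\psi_1)=0$, so $\varepsilon(\psi_1)=0$ and $c_{x_0^n}(\psi_1)=c_{x_1^n}(\psi_1)=0$ for all $n\ge1$ (project onto the commutative quotients $x_1\mapsto0$, resp. $x_0\mapsto0$, under which every Lie element of degree $\ge2$ vanishes); these are exactly the hypotheses of that lemma. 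Note that the reduced coaction equation \eqref{eq:reduced_coaction_equation} and its companion \eqref{eq:skew_reduced_equation} for $\psi_2$ have already been built into those expressions, which is where the hypothesis $\psi_2\in\overline{{\mathfrak{rc}}_0}$ enters.

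The crux is a cancellation. When the four contributions are added, all terms containing $r_{\psi_2}(x_1)$ cancel in pairs: the two such terms produced by the block $\{\psi_1|I^{\mu}(x_0;\psi_2;x_1)\}+\{\psi_1|I^{\mu}(x_1;\psi_2;x_0)\}$ are annihilated by the $r_{\psi_2}(x_1)$-terms coming from $\{\psi_1|I^{\mu}(\psi_2;x_1)\}$ and $\{\psi_1|I^{\mu}(x_1;\psi_2)\}$ respectively (the latter after the internal $d^R_1(\psi_2)$-cancellation already visible in Lemma \ref{lemma:calculation_d_mu}). What survives splits into two groups. The first consists of the four terms carrying $r_{\psi_2}(x_0)$, which I would collect using $\varepsilon(\psi_1)=0$ together with the two decompositions $\psi_1=x_0\,{}_{x_0}(\psi_1)+x_1\,{}_{x_1}(\psi_1)=(\psi_1)_{x_0}x_0+(\psi_1)_{x_1}x_1$, yielding
\begin{equation*}
-r_{\psi_2}(x_0)\psi_1+\psi_1 r_{\psi_2}(x_0)=[\psi_1,r_{\psi_2}(x_0)].
\end{equation*}
The second group is exactly the four quadratic monomials $(\psi_2)_{x_0}x_1{}_{x_1}(\psi_1)+{}_{x_1}(\psi_2)x_1{}_{x_1}(\psi_1)-(\psi_2)_{x_1}x_1{}_{x_1}(\psi_1)-(\psi_1)_{x_1}x_1(\psi_2)_{x_0}$.

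Combining this expression for $d^{\mu}_{\psi_2}(\psi_1)$ with the contribution of $d_{\psi_2}(\mu(\psi_1))$ from the first paragraph yields the stated identity. The only delicate point is the bookkeeping: one must track signs through the four surviving cases of $I^{\mu}$, and then recognise both the telescoping cancellation of the $r_{\psi_2}(x_1)$-terms and the recombination of the $r_{\psi_2}(x_0)$-terms into a single commutator. Once these two features are spotted the identity follows at once, so I expect the assembling of signs — rather than any conceptual difficulty — to be the main obstacle.
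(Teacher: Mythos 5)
Your proposal is correct and follows exactly the paper's route: split $\mu\circ d_{\psi_2}(\psi_1)=d^{\mu}_{\psi_2}(\psi_1)+d_{\psi_2}(\mu(\psi_1))$ and feed in Lemmas \ref{lemma:calculation_d_mu} and \ref{lemma:calculation_mu}. Your extra bookkeeping — the cancellation of the $r_{\psi_2}(x_1)$-terms, the reassembly of the $r_{\psi_2}(x_0)$-terms into $[\psi_1,r_{\psi_2}(x_0)]$ via the two decompositions of $\psi_1$, and the justification that $c_{x_0^n}(\psi_1)=c_{x_1^n}(\psi_1)=0$ — is accurate and in fact more detailed than what the paper records.
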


\begin{proof}
We have that $\mu\circ d_{\psi_2}(\psi_1)=d_{\psi_2}(\mu(\psi_1))+d^{\mu}_{\psi_2}(\psi_1)$. The terms $d_{\psi_2}(\mu(\psi_1))$ is calculated in Lemma \ref{lemma:calculation_mu} and $d^{\mu}_{\psi_2}(\psi_1)$ is calculated in the Lemma \ref{lemma:calculation_d_mu}.
\end{proof}

\begin{Lemma}\label{lem:mid_2}
	For $\psi_1,\psi_2\in \overline{{\mathfrak{rc}}_0}$, $r_{\{\psi_1,\psi_2\}}=0$ and
	\begin{align*}
		\{\psi_1,\psi_2\}_{x_0}=&d_{\psi_2}((\psi_1)_{x_0})+(\psi_1)_{x_1}x_1(\psi_2)_{x_0}-d_{\psi_1}((\psi_2)_{x_0})-(\psi_2)_{x_1}x_1(\psi_1)_{x_0}\\
		&-\psi_1(\psi_2)_{x_0}+\psi_2(\psi_1)_{x_0}\\
		{}_{x_1}\{\psi_1,\psi_2\}=&\psi_2~{}_{x_1}(\psi_1)-{}_{x_1}(\psi_2)x_1{}_{x_1}(\psi_1)+d_{\psi_2}({}_{x_1}(\psi_1))\\
		&-\psi_1~{}_{x_1}(\psi_2)+{}_{x_1}(\psi_1)x_1{}_{x_1}(\psi_2)-d_{\psi_1}({}_{x_1}(\psi_2))\\
		&-{}_{x_1}(\psi_1)\psi_2+x_1(\psi_2)\psi_1
	\end{align*}
\end{Lemma}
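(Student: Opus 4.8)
The plan is to expand the Ihara bracket via \eqref{eq:Ihara_bracket} as $\{\psi_1,\psi_2\}=d_{\psi_2}(\psi_1)-d_{\psi_1}(\psi_2)-[\psi_1,\psi_2]$ and to read off the left $x_0$-cofactor $(\cdot)_{x_0}=d^L_0$ and the right $x_1$-cofactor ${}_{x_1}(\cdot)=d^R_1$ of each of the three summands separately. Since the stated formulas are linear in the three summands, and the first two summands differ only by swapping $\psi_1\leftrightarrow\psi_2$, the whole computation reduces to two ``Leibniz-type'' identities describing how the cofactor operations interact with a single Ihara derivation $d_\psi$, together with the trivial cofactors of a concatenation product.

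First I would establish, for any $x,\psi$ with $\varepsilon(x)=\varepsilon(\psi)=0$, the two identities
\[(d_\psi(x))_{x_0}=d_\psi((x)_{x_0})+(x)_{x_1}x_1(\psi)_{x_0},\]
\[{}_{x_1}(d_\psi(x))=\psi\,{}_{x_1}(x)-{}_{x_1}(\psi)\,x_1\,{}_{x_1}(x)+d_\psi({}_{x_1}(x)).\]
Both follow by substituting the decompositions $x=(x)_{x_0}x_0+(x)_{x_1}x_1=x_0{}_{x_0}(x)+x_1{}_{x_1}(x)$, applying $d_\psi$ as a derivation, using $d_\psi(x_0)=0$ and $d_\psi(x_1)=x_1\psi-\psi x_1$, and then isolating the part ending in $x_0$ (respectively beginning with $x_1$); the hypotheses $\varepsilon(x)=\varepsilon(\psi)=0$ are used precisely to suppress constant-term contributions, so that e.g. $((x)_{x_1}x_1\psi)_{x_0}=(x)_{x_1}x_1(\psi)_{x_0}$ holds with no spurious extra piece. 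Combined with the immediate concatenation cofactors $([\psi_1,\psi_2])_{x_0}=\psi_1(\psi_2)_{x_0}-\psi_2(\psi_1)_{x_0}$ and ${}_{x_1}([\psi_1,\psi_2])={}_{x_1}(\psi_1)\psi_2-{}_{x_1}(\psi_2)\psi_1$, assembling the three summands reproduces the two displayed formulas for $(\{\psi_1,\psi_2\})_{x_0}$ and ${}_{x_1}(\{\psi_1,\psi_2\})$ termwise.

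For the vanishing $r_{\{\psi_1,\psi_2\}}=0$ I would argue by depth, that is, by the number of occurrences of the letter $x_1$. By definition $r_{\{\psi_1,\psi_2\}}$ is read off from the coefficients of the words $x_0^{l+1}x_1$, all of which have depth $1$, so it suffices to show that $\{\psi_1,\psi_2\}$ has no terms of depth $1$. Each $\psi_i\in\overline{{\mathfrak{rc}}_0}$ is a Lie element with $c_{x_0}(\psi_i)=0$, hence has no pure $x_0$-power terms, so every word occurring in $\psi_i$ contains at least one $x_1$, i.e. $\psi_i$ has depth $\ge 1$. Then $[\psi_1,\psi_2]$ has depth $\ge 2$; and since $d_\psi(x_1)=[x_1,\psi]$ replaces an $x_1$ of depth $1$ by a quantity of depth $\ge 2$ while $d_\psi(x_0)=0$, the derivations $d_{\psi_2}$ and $d_{\psi_1}$ raise the minimal depth, so $d_{\psi_2}(\psi_1)$ and $d_{\psi_1}(\psi_2)$ also have depth $\ge 2$. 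Thus all three summands, and therefore $\{\psi_1,\psi_2\}$, have depth $\ge 2$, giving $r_{\{\psi_1,\psi_2\}}=0$.

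The routine but error-prone step is the bookkeeping in the two Leibniz identities: one must keep straight which end of each noncommutative word is being extracted and carry the signs coming from $d_\psi(x_1)=x_1\psi-\psi x_1$ correctly through the derivation. Beyond that I do not expect a genuine obstacle; it is worth noting that this lemma uses only $\varepsilon(\psi_i)=0$, the Lie property and $c_{x_0}(\psi_i)=0$, and not the skew-symmetry or the reduced coaction equation, since it is purely the combinatorial extraction of cofactors to be fed into the proof of Theorem \ref{th:Ihara_bracket}.
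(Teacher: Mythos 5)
Your proposal is correct and follows essentially the same route as the paper: the two Leibniz-type identities for $(d_\psi(x))_{x_0}$ and ${}_{x_1}(d_\psi(x))$ are exactly the derivation-rule computations in the paper's proof, and the vanishing of $r_{\{\psi_1,\psi_2\}}$ is the paper's observation that $\{\psi_1,\psi_2\}$ contains no words of the shape $x_0^n x_1$. Your depth-$\ge 2$ argument merely supplies the justification for that observation which the paper leaves implicit.
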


\begin{proof}
	The series $\{\psi_1,\psi_2\}=d_{\psi_2}(\psi_1)-d_{\psi_1}(\psi_2)-[\psi_1,\psi_2]$ contains no terms of the shape $x^n_0x_1$, the function $r_{\{\psi_1,\psi_2\}}$ is by definition 0.
	
	For the calculation of the term $\{\psi_1,\psi_2\}_{x_0}$ and ${}_{x_1}\{\psi_1,\psi_2\}$, we simply use the derivation rule. 
	\begin{align*}
		(d_{\psi_2}(\psi_1))_{x_0}&=(d_{\psi_2}((\psi_1)_{x_0}x_0+(\psi_1)_{x_1}x_1))_{x_0}\\
		&=d_{\psi_2}((\psi_1)_{x_0})+(\psi_1)_{x_1}x_1(\psi_2)_{x_0}\\
		{}_{x_1}(d_{\psi_2}(\psi_1))&={}_{x_1}(d_{\psi_2}(x_1{}_{x_1}(\psi_1)+x_0{}_{x_0}(\psi_1)))\\
		&=\psi_2~{}_{x_1}(\psi_1)-{}_{x_1}(\psi_2)x_1{}_{x_1}(\psi_1)+d_{\psi_2}(x_1(\psi_1))
	\end{align*}
\end{proof}

\begin{Lemma}\label{lem:mid3}
For $\psi_1,\psi_2\in \overline{{\mathfrak{rc}}_0}$, we have the calculation
\begin{align*}
&\mu([\psi_1,\psi_2])\\
=&[\mu(\psi_1),\psi_2]-[\mu(\psi_2),\psi_1]+(\psi_1)_{x_1}x_1{}_{x_1}(\psi_2)+(\psi_1)_{x_0}x_0{}_{x_0}(\psi_2)\\
&-(\psi_2)_{x_1}x_1{}_{x_1}(\psi_1)-(\psi_2)_{x_0}x_0{}_{x_0}(\psi_1)\\
=&[-r_{\psi_1}(x_1)+r_{\psi_1}(x_0)-(\psi_1)_{x_0}-{}_{x_1}(\psi_1),\psi_2]-[-r_{\psi_2}(x_1)+r_{\psi_2}(x_0)-(\psi_2)_{x_0}-{}_{x_1}(\psi_2),\psi_1]\\
&+(\psi_1)_{x_1}x_1{}_{x_1}(\psi_2)+(\psi_1)_{x_0}x_0{}_{x_0}(\psi_2)-(\psi_2)_{x_1}x_1{}_{x_1}(\psi_1)-(\psi_2)_{x_0}x_0{}_{x_0}(\psi_1)
\end{align*}
\end{Lemma}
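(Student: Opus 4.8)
The plan is to first isolate a Leibniz-type rule for $\mu$ with respect to concatenation, and then specialize it to a commutator; the skew-symmetry and the reduced coaction equation enter only at the very end. For two words $u=k_1\dots k_m$ and $v=l_1\dots l_p$, the defining sum for $\mu(uv)$ ranges over the adjacent pairs of $k_1\dots k_m l_1\dots l_p$, and these split into three disjoint families: the pairs lying entirely in $u$ (positions $1,\dots,m-1$), the pairs lying entirely in $v$ (positions $m+1,\dots,m+p-1$), and the single junction pair $(k_m,l_1)$. The first family reassembles into $\mu(u)\,v$, the third into $u\,\mu(v)$, and the junction pair contributes only when $k_m=l_1$, in which case it merges the two equal letters into one. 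Encoding ``delete the last letter of $u$'' by $(u)_{x_i}=d^L_i(u)$ and ``delete the first letter of $v$'' by ${}_{x_i}(v)=d^R_i(v)$, I would obtain
\begin{equation*}
\mu(uv)=\mu(u)\,v+u\,\mu(v)+\sum_{i=0}^{1}(u)_{x_i}\,x_i\,{}_{x_i}(v),
\end{equation*}
which extends bilinearly to all $u,v\in A$ with vanishing constant term. A one-line check on $u=x_0$, $v=x_0x_1$ (where both $\mu(u)$ and $\mu(v)$ vanish, yet $\mu(uv)=x_0x_1$ comes entirely from the junction) confirms the boundary term.

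With this rule in hand I would apply it to $\psi_1\psi_2$ and to $\psi_2\psi_1$ and subtract. The four ``bulk'' terms $\mu(\psi_1)\psi_2+\psi_1\mu(\psi_2)-\mu(\psi_2)\psi_1-\psi_2\mu(\psi_1)$ reorganize into $[\mu(\psi_1),\psi_2]-[\mu(\psi_2),\psi_1]$, while the two junction contributions give exactly $(\psi_1)_{x_1}x_1{}_{x_1}(\psi_2)+(\psi_1)_{x_0}x_0{}_{x_0}(\psi_2)$ together with the same expression with $\psi_1$ and $\psi_2$ interchanged and an overall sign. This is precisely the first displayed equality. I would emphasize that this step uses nothing beyond $\psi_1,\psi_2\in\ker\varepsilon$, so neither the skew-symmetry nor the reduced coaction equation has been invoked yet.

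The second equality is then pure substitution. Since $\psi_1,\psi_2\in\overline{{\mathfrak{rc}}_0}$ satisfy \eqref{variant_reduced_coaction_equation}, I would solve that equation for the reduced coaction,
\begin{equation*}
\mu(\psi_i)=-r_{\psi_i}(x_1)+r_{\psi_i}(x_0)-d^R_1(\psi_i)-d^L_0(\psi_i)=-r_{\psi_i}(x_1)+r_{\psi_i}(x_0)-{}_{x_1}(\psi_i)-(\psi_i)_{x_0},
\end{equation*}
and insert this into the two brackets $[\mu(\psi_1),\psi_2]$ and $[\mu(\psi_2),\psi_1]$ of the first equality, leaving the four junction terms untouched. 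This reproduces the stated right-hand side verbatim.

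The only genuinely delicate point is the bookkeeping in the Leibniz rule: one must verify that the junction pair is counted exactly once and that deleting the terminal letter of $u$ and the initial letter of $v$ is correctly captured by $(u)_{x_i}$ and ${}_{x_i}(v)$, paying attention to the edge cases where $u$ or $v$ is a single letter, so that $\mu(u)$ or $\mu(v)$ vanishes while the junction term persists. Once the concatenation rule is pinned down, everything else is linear algebra and a single substitution.
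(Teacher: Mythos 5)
Your proof is correct and takes the route the paper leaves implicit: Lemma \ref{lem:mid3} is stated there without proof, and its first equality is precisely your Leibniz rule $\mu(uv)=\mu(u)\,v+u\,\mu(v)+\sum_{i=0}^{1}(u)_{x_i}x_i\,{}_{x_i}(v)$ (which follows from the definition of $\mu$ by splitting the adjacent pairs of $uv$ into those inside $u$, those inside $v$, and the junction pair) specialized to the commutator $\psi_1\psi_2-\psi_2\psi_1$, while the second equality is exactly the substitution of $\mu(\psi_i)=-r_{\psi_i}(x_1)+r_{\psi_i}(x_0)-{}_{x_1}(\psi_i)-(\psi_i)_{x_0}$ from \eqref{variant_reduced_coaction_equation}. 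Your handling of the junction term, including the single-letter edge cases, is accurate, so the argument is complete.
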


With all the preparations, for $\psi_1,\psi_2\in \overline{{\mathfrak{rc}}_0}$, we could directly compute
\begin{align*}
&{}_{x_1}(\{\psi_1,\psi_2\})+\mu\{\psi_1,\psi_2\}+(\{\psi_1,\psi_2\})_{x_0}+r_{\{\psi_1,\psi_2\}}(x_1)-r_{\{\psi_1,\psi_2\}}(x_0)\\
=&{}_{x_1}(\{\psi_1,\psi_2\})+\mu(d_{\psi_2}(\psi_1)-d_{\psi_1}(\psi_2)-[\psi_1,\psi_2])+(\{\psi_1,\psi_2\})_{x_0}=0
\end{align*}
The first equality is because of Lemma \ref{lem:mid_1}, the functions $r_{\{\psi_1,\psi_2\}}$ vanishes. The second equality is by the Lemma \ref{lem:mid_1}, \ref{lem:mid_2} and \ref{lem:mid3}. With this equality, we prove that $\overline{{\mathfrak{rc}}_0}$ is closed under Ihara bracket, and it is a Lie algebra.

\end{document}